\documentclass[preprint,12pt]{elsarticle}

\usepackage{amsmath,amssymb,amsthm}
\usepackage{tikz}
\usepackage{booktabs}
\usetikzlibrary{arrows.meta, positioning}
\usepackage[font=footnotesize]{caption}
\usepackage{graphicx}
\usepackage{makecell}
\usepackage{hyperref}
\usepackage{titlesec}
\titleformat{\paragraph}[runin]{\normalfont\bfseries}{}{0em}{}[]

\newtheorem{theorem}{Theorem}
\newtheorem{corollary}{Corollary}
\newtheorem{lemma}{Lemma}
\theoremstyle{definition}
\newtheorem{definition}{Definition}

\journal{[Journal Name]}

\begin{document}

\begin{frontmatter}

\title{Absorbing States of Binary Trust Gossip Are\\ Counted by Plane Partitions}

\author{Nicholas Boichuk}
\address{Independent Researcher}
\ead{nicholas.boichuk3@nhs.net}

\begin{abstract}
We study an opinion dynamics model in which $n$ agents hold directed
trust or distrust opinions about one another, represented as a matrix
$M \in \{0,1\}^{n \times n}$ in which 1 represents trust and 0
represents distrust. A gossip event $(a, z, y)$ causes agent $z$ to
adopt agent $a$'s opinion of $y$, provided that $z$ trusts $a$. We
characterize the absorbing states of this process, i.e.\ the states
in which no further opinion change can take place: we find that they
are the states in which agents are partitioned into isolated factions,
each faction containing a subset of core members who share mutual
trust, while the remaining peripheral members trust all core members
but receive no trust in return. This structure establishes a bijection
between absorbing states on $[n]$ and pairs consisting of a set
partition $\pi$ of $[n]$ together with a choice of non-empty subset
of each faction of $\pi$. The number of such absorbing states is
therefore given by OEIS A143405, with exponential generating function
$\exp(\exp(x) \cdot (\exp(x) - 1))$. In addition, up to isomorphism,
the count equals the number of plane partitions of $n$, given by OEIS
A000219, recovering MacMahon's classical product formula
$\prod_{k \geq 1} 1/(1 - x^k)^k$. Exhaustive computation for
$n \leq 7$ confirms both counts.
\end{abstract}

\begin{keyword}
opinion dynamics \sep gossip \sep absorbing states \sep
plane partitions \sep set partitions \sep
core-periphery structure
\end{keyword}

\end{frontmatter}


\section{Introduction and Motivation}

The dynamics of information flow between discrete agents have
been studied from computational, mathematical and network science
perspectives among others. Classical models such as DeGroot (1974)
\cite{degroot1974} and Friedkin--Johnsen (1990) \cite{friedkin1990}
describe how agents update a single opinion, such as a preference or
belief, by averaging over their neighbours in a fixed social network.
More recent work on epistemic gossip protocols (van Ditmarsch et al.,
2017 \cite{vanditmarsch2017}; Cooper et al., 2019 \cite{cooper2019})
studies how agents share private information (``secrets'') through
pairwise communication, with the goal of reaching common knowledge.
In these two traditions, the network that mediates communication is
exogenous: who trusts whom is fixed in advance and unaffected by the
content of what is communicated.

There has been relatively little exploration of endogenous information
propagation, in which agents exchange opinions about other agents on
the same network, and where those opinions in turn determine whose
future communications will be accepted or ignored. This mechanism, of
opinion recursively influencing its own propagation, has the potential
to produce non-trivial, self-reinforcing dynamics that are absent from
models with fixed trust structures. The closest existing model is the
Leviathan of Deffuant, Carletti, and Huet (2013): this uses the
architecture of an $N \times N$ matrix of continuous opinions serving
simultaneously as the trust network, and treats this architecture to a
simulation-based analysis \cite{deffuant2013}. By restricting opinions
to binary values, we will show that it is possible to obtain a finite
state space amenable to exact combinatorial analysis.

In the simplest version of such a model, given a set $S$ composed of
$n$ agents, each agent will hold one binary opinion about every other
agent in the group: either trust or distrust, represented by either 1
or 0. In the case of $n = 3$, where we have agents $a$, $b$, and $c$,
to describe the system we require six variables:
\[
\begin{array}{ll}
a(b),\ a(c) \\
b(a),\ b(c) \\
c(a),\ c(b),
\end{array}
\]
where $a(b)$ is agent $a$'s opinion of agent $b$.

Given binary opinion and three agents, this yields $2^6 = 64$ possible
system configurations. These variables can be used to populate a
square opinion matrix $M_{n \times n}$ which defines the current state
of the system:
\[
\begin{bmatrix}
- & a(b) & a(c) \\
b(a) & - & b(c) \\
c(a) & c(b) & -
\end{bmatrix}
\]
Note that such a setup excludes the possibility of self-opinion, so
the diagonal corresponding to self-opinion is undefined.

The presence of $n \geq 3$ agents allows us to define the gossip
function, where agent $a$ (the speaker) gossips to agent $z$ (the
listener) about agent $y$ (the target), sharing the speaker's trust or
distrust of the target, which causes the listener to adopt the opinion
of the speaker about the target, but only if the listener trusts the
speaker---if the listener does not, the speaker's opinion is ignored.
More formally, for agents $a, z, y \in S$, the gossip function
$\mathrm{gos}(M, a, z, y)$ sets $M_{zy} \leftarrow M_{ay}$ if
$M_{za} = 1$, otherwise leaves $M_{zy}$ unchanged if $M_{za} = 0$.
The agents to which the gossip function gets applied are selected
randomly: at each step, any valid triple may be selected.

Note that when an opinion flips $z(y) = 0 \to 1$ then agent $z$
becomes open to the influence of agent $y$, and conversely when
$z(y) = 1 \to 0$ then agent $z$ becomes invulnerable to any further
influence from $y$---i.e.\ when $M_{za} = 0$, row $z$ cannot receive
any entries from row $a$, but when $M_{za} = 1$, row $z$ will become
populated by the entry in row $a$ along the column $y$, if $(a, z, y)$
were to be selected by the gossip function. This is the core endogenous
nature of the model, since there is no distinction between opinions
and trust of others' opinions. The trust matrix and opinion matrix
are one and the same.

Furthermore, we can say that a state is \emph{absorbing} if no further
gossip can change it, which is the case if and only if for all
$a, z, y \in S$, $M_{za} = 0$ or $M_{zy} = M_{ay}$, i.e.\ every agent either distrusts a given speaker, or already agrees with that speaker about every third party, with no exceptions across the whole set. This is a fixed
point of the system from which no further change is possible. Where
this condition is not met, i.e.\ when at least one agent trusts an
agent with whom it disagrees, that different opinion may yet be
adopted by the trusting agent, and therefore the opinion matrix can
still evolve.

The main result of this work is a complete characterization of these
absorbing states (Characterization Theorem), from which we derive the
exact counts of absorbing states on labeled agents (Corollaries~1
and~2) and the number of absorbing states up to relabelling of
agents (Corollary~3). Perhaps surprisingly, the number of absorbing
states up to isomorphism for $n$ agents coincides with OEIS sequence
A000219 (the plane partitions of $n$, first enumerated by MacMahon
\cite{macmahon1912,macmahon1916}) while the total number of absorbing
states for $n$ agents coincides with A143405, a different sequence
hitherto unconnected with plane partitions.


\section{Characterization of Absorbing States}

\begin{definition}
Given the opinion matrix $M$ of an absorbing state with set $S$ of
agents, define the \emph{mutual trust relation} $\sim$ on $S$ as:
$i \sim j$ if and only if $M_{ij} = 1$ and $M_{ji} = 1$. Note that
$\sim$ is symmetric by definition.
\end{definition}

\begin{lemma}\label{lem:equiv}
In any absorbing state, the mutual trust relation $\sim$ is an
equivalence relation.
\end{lemma}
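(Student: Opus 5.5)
Symmetry of $\sim$ holds by definition, so the work lies in reflexivity and transitivity. The diagonal of $M$ is undefined, so reflexivity cannot come from the matrix. I will adopt the convention that every agent is related to itself, i.e.\ declare $i \sim i$; equivalently, I treat $M_{ii}=1$ as an implicit self-trust. This convention has to be stated explicitly. Whatever convention is chosen must also be applied consistently in the absorbing condition, which quantifies over triples $(a,z,y)$ of distinct agents. So I restrict all uses of that condition to $a,z,y$ pairwise distinct.

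The main step is transitivity. Suppose $i \sim j$ and $j \sim k$ with $i,j,k$ distinct; if $i=k$ there is nothing to prove under the reflexivity convention. I need $M_{ik}=1$ and $M_{ki}=1$. First apply the absorbing condition to the triple $(a,z,y)=(j,i,k)$. Since $M_{ij}=1$, i.e.\ $i$ trusts $j$, we must have $M_{ik}=M_{jk}$. But $M_{jk}=1$ because $j\sim k$, so $M_{ik}=1$. Symmetrically, apply the condition to $(a,z,y)=(j,k,i)$. Here $M_{kj}=1$ forces $M_{ki}=M_{ji}=1$. Hence $i\sim k$.

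I expect no real obstacle beyond being careful with the diagonal. The only subtlety is making sure each triple used consists of three distinct agents, so that every matrix entry invoked is defined. The case $i=k$ is exactly the place where a self-opinion would appear. It is handled by the reflexivity convention rather than by gossip, and I will note this explicitly.

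Only the one-directional trust $M_{ij}=1$ and $M_{kj}=1$ is used to propagate the opinions about the third party. This suggests the stronger fact, useful later for the core--periphery structure, that trust is transitive into a mutual-trust class. I will remark on it but not prove it here.
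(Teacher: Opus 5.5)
Your proof is correct and is essentially the paper's. Both prove transitivity by applying the absorbing condition to the two gossip events in which the shared agent tells each of the other two agents its opinion of the other. The paper phrases this with $i$ as the shared agent, showing $i\sim j,\ i\sim k \Rightarrow j\sim k$. Your explicit reflexivity convention, together with restricting the absorbing condition to distinct triples, handles the diagonal more carefully than the paper's remark that reflexivity is ``vacuous''.
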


\begin{proof}
Reflexivity is vacuous (diagonal entries are undefined, and no gossip
event involves self-opinions). Symmetry holds by definition.

To prove transitivity, assume $i \sim j$ and $i \sim k$; we must show
that this implies $j \sim k$, meaning $M_{jk} = 1$ and $M_{kj} = 1$.

Since $M$ describes an absorbing state, when agent $i$ tells $j$ what
$i$ thinks of $k$ (the gossip event $\mathrm{gos}(M, i, j, k)$), $M$
must remain unchanged. Agent $i$ trusts $k$ ($M_{ik} = 1$ from
$i \sim k$), and since agent $j$ trusts $i$ ($M_{ji} = 1$), the
absorbing condition requires that agent $j$ must already agree with
$i$ about $k$ ($M_{jk} = M_{ik}$), so $M_{jk} = 1$. The symmetric
argument via $\mathrm{gos}(M, i, k, j)$ gives $M_{kj} = 1$, which
together give $j \sim k$. In other words, agent $j$ ends up trusting $k$ because they trust $i$, who in turn trusts $k$, and conversely agent $k$ ends up trusting $j$ because they trust $i$, who in turn trusts $j$.
\end{proof}

\begin{definition}
Given an absorbing state with set $S$ of agents, define the
\emph{one-way trust relation} $\rhd$ on $S$ as: for two distinct
agents $i, j \in S$, $i \rhd j$ if and only if $M_{ij} = 1$ and
$M_{ji} = 0$. Agent $i$ is said to have outgoing trust to $j$, while
agent $j$ is said to have incoming trust from $i$.
\end{definition}

\begin{lemma}\label{lem:exclusive}
In any absorbing state, an agent can have
either mutual trust relations or outgoing one-way trust relations,
but not both.
\end{lemma}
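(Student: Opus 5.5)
We argue by contradiction. Suppose that in an absorbing state some agent $i$ has both a mutual trust relation and an outgoing one-way trust relation: there are distinct agents $j, k \neq i$ with $i \sim j$ (so $M_{ij} = M_{ji} = 1$) and $i \rhd k$ (so $M_{ik} = 1$, $M_{ki} = 0$). Note that $j \neq k$ automatically, since $M_{ji} = 1$ while $M_{ki} = 0$. The aim is to find a gossip event that would still change $M$, contradicting absorption.

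The key step uses the absorbing condition twice, exactly as in the proof of Lemma~\ref{lem:equiv}. First take the event $\mathrm{gos}(M, i, j, k)$. Since $M_{ji} = 1$, absorption forces $M_{jk} = M_{ik} = 1$, so $j$ trusts $k$. Next take the event $\mathrm{gos}(M, k, j, i)$, in which $k$ tells $j$ its opinion of $i$. Since $j$ now trusts $k$, absorption forces $M_{ji} = M_{ki} = 0$. This contradicts $M_{ji} = 1$, so no such $i$ exists.

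Intuitively, $j$ trusts $i$ and so inherits $i$'s trust of $k$. But $k$ distrusts $i$, and $j$ would then inherit that distrust, breaking the mutual bond between $i$ and $j$. The only point needing care is that all three agents in each triple are distinct, so both gossip events are valid. This holds since $i, j, k$ are pairwise distinct, and so the obstacle is minimal. I would also note that ``either \ldots\ but not both'' is read as ``not both''; an agent with neither kind of relation is allowed.
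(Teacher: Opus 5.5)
Your proof is correct and is essentially identical to the paper's. Both apply the absorbing condition to $\mathrm{gos}(M,i,j,k)$ to get $M_{jk}=1$, and then to $\mathrm{gos}(M,k,j,i)$ to force $M_{ji}=M_{ki}=0$, contradicting $i \sim j$; your explicit check that $i,j,k$ are pairwise distinct, so both gossip events are valid, is a small addition the paper leaves implicit.
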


\begin{proof}
Suppose $i \sim j$ and $i \rhd k$ in an absorbing state.
$M_{ji} = 1$ and $M_{ik} = 1$, therefore the gossip event
$\mathrm{gos}(M, i, j, k)$ (agent $i$ tells $j$ what $i$ thinks of
$k$) requires that $j$ also trusts $k$ ($M_{jk} = 1$). The further
gossip event $\mathrm{gos}(M, k, j, i)$ (agent $k$ tells $j$ what
$k$ thinks of $i$) requires $j$'s opinion of $i$ to be 0
($M_{ji} = M_{ki} = 0$), contradicting $M_{ji} = 1$ and violating
the $i \sim j$ assumption. In other words, agent $i$ undermines
itself by trusting $k$ (an agent who does not trust them back),
passing that trust on to an agent $j$ who does initially trust them
back, but becomes open to $k$'s influence and stops trusting $i$,
leaving $i$ distrusted by both, and demonstrating that the opinion
system had in fact not reached stability.
\end{proof}

\begin{lemma}\label{lem:cascade}
In any absorbing state, every agent $i$ that has at least one outgoing
trust relation to $j$ will also have an outgoing trust relation to
every agent that $j$ has a mutual trust relation with, i.e.\ if
$i \rhd j$ and $j \sim k$, then $i \rhd k$.
\end{lemma}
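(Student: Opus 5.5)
We need to show that $i \rhd j$ and $j \sim k$ imply $M_{ik}=1$ and $M_{ki}=0$. Both halves come from applying the absorbing condition ($M_{za}=0$ or $M_{zy}=M_{ay}$) to a suitable gossip triple. Distinctness is harmless. We have $i\neq j$ by the definition of $\rhd$ and $j\neq k$ since $\sim$ involves distinct agents. Also $k\neq i$: otherwise $j\sim i$ would give $M_{ji}=1$, contradicting $i\rhd j$.

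\textbf{Step 1: $M_{ik}=1$.} Consider the gossip event $\mathrm{gos}(M,j,i,k)$, in which $j$ tells $i$ its opinion of $k$. Since $i\rhd j$ we have $M_{ij}=1$, so the listener trusts the speaker. The absorbing condition then forces $M_{ik}=M_{jk}$. From $j\sim k$ we get $M_{jk}=1$, hence $M_{ik}=1$. Intuitively, $i$ trusts $j$ and $j$ trusts $k$, so $i$ must already trust $k$.

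\textbf{Step 2: $M_{ki}=0$.} Here I use the event $\mathrm{gos}(M,j,k,i)$, in which $j$ tells $k$ its opinion of $i$. Since $j\sim k$, we have $M_{kj}=1$, so again the listener trusts the speaker. The absorbing condition gives $M_{ki}=M_{ji}$, and $M_{ji}=0$ because $i\rhd j$. Hence $M_{ki}=0$. Combined with Step 1, this yields $i\rhd k$.

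\textbf{Alternative for Step 2.} One could instead argue by contradiction via Lemma~\ref{lem:exclusive}. If $M_{ki}=1$, then $i\sim k$, so $i$ would have both a mutual trust relation and the outgoing one-way relation $i\rhd j$, which that lemma forbids. Either route works; the direct gossip-event argument is cleaner and does not depend on the earlier lemma.

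\textbf{Where care is needed.} There is no real obstacle, since each half is a single application of the absorbing condition. The only point requiring attention is choosing the right speaker, listener and target in each triple. In both triples the speaker must be $j$, because $j$ is the agent trusted by the listener in each case ($M_{ij}=1$ and $M_{kj}=1$). The target is then the remaining agent whose opinion we want to pin down.
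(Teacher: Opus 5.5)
Your proof is correct. Step~1 is exactly the paper's argument, and the decomposition into $M_{ik}=1$ and $M_{ki}=0$ is the same.

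The only difference is in Step~2. The paper takes your ``alternative'' route: it deduces $M_{ki}=0$ from Lemma~\ref{lem:exclusive}, since $M_{ik}=1$ together with $i\rhd j$ rules out $i\sim k$. Your direct use of $\mathrm{gos}(M,j,k,i)$ needs a single application of the absorbing condition and makes the lemma independent of the earlier one. It is also exactly what the paper's own informal gloss describes (``$k$ also trusts $j$ and therefore adopts their distrust of $i$'').

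Your explicit check that $k\neq i$ is a small gain in rigor. The absorbing condition is only imposed on triples of distinct agents, and the paper leaves this check implicit.
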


\begin{proof}
Since $i$ trusts $j$ ($M_{ij} = 1$) and $j$ trusts $k$
($M_{jk} = 1$), the gossip event $\mathrm{gos}(M, j, i, k)$ (agent
$j$ tells $i$ what $j$ thinks of $k$) requires that agent $i$'s
opinion of $k$ be identical to $j$'s ($M_{ik} = M_{jk}$) under the
absorbing condition, so $M_{ik} = 1$. From Lemma~\ref{lem:exclusive}, $i \sim k$
cannot be true since $i \rhd j$ is already true, so $M_{ki} = 0$,
therefore $i \rhd k$. In other words, agent $i$ trusts $j$ and therefore adopts their trust of $k$, while agent $k$ also trusts $j$ and therefore adopts their distrust of $i$.
\end{proof}

\begin{lemma}\label{lem:targets}
In any absorbing state with set $S$ of agents, if agent $i$ has outgoing trust relations to
two distinct agents $j, k \in S$, then the two agents must share
mutual trust, i.e.\ if $i \rhd j$ and $i \rhd k$, then $j \sim k$.
\end{lemma}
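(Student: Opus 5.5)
We need to show: if $i \rhd j$ and $i \rhd k$ with $j \neq k$, then $M_{jk}=1$ and $M_{kj}=1$. The plan is to exploit the absorbing condition via gossip events in which $i$ is the listener, since $i$ trusts both $j$ and $k$. By symmetry it then suffices to establish $M_{jk}=1$; the same argument with $j$ and $k$ swapped gives $M_{kj}=1$.

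First, consider the gossip event $\mathrm{gos}(M, j, i, k)$, in which agent $j$ tells $i$ what $j$ thinks of $k$. Since $M_{ij}=1$, the absorbing condition forces $M_{ik} = M_{jk}$. But $M_{ik}=1$ because $i \rhd k$, so $M_{jk}=1$. Symmetrically, the event $\mathrm{gos}(M, k, i, j)$ gives $M_{kj} = M_{ij} = 1$. Together these yield $j \sim k$.

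The argument is direct, so I anticipate essentially no obstacle. The only point to check is that the triples used are valid gossip events, i.e.\ consist of three distinct agents. This holds because $j \neq k$ by hypothesis, and $i \neq j$, $i \neq k$ since $\rhd$ is defined only between distinct agents.

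I would close with the intuitive reading used elsewhere in the paper: $i$ trusts both $j$ and $k$, so each must already agree with $i$'s trust of the other. In particular, Lemma~\ref{lem:exclusive} and Lemma~\ref{lem:cascade} are not needed here.
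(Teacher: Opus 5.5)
Your proof is correct and matches the paper's own argument: the paper also applies the absorbing condition to $\mathrm{gos}(M, j, i, k)$ and $\mathrm{gos}(M, k, i, j)$, with $i$ as listener, to get $M_{jk}=M_{ik}=1$ and $M_{kj}=M_{ij}=1$. Your check that both triples consist of distinct agents is a small addition that the paper leaves implicit.
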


\begin{proof}
If agent $i$ trusts both $j$ and $k$ ($M_{ij} = 1$ and $M_{ik} = 1$),
then under the absorbing condition the gossip event
$\mathrm{gos}(M, j, i, k)$ (agent $j$ tells $i$ what $j$ thinks of
$k$) requires that $M_{jk} = M_{ik} = 1$, and equivalently the gossip
event $\mathrm{gos}(M, k, i, j)$ (agent $k$ tells $i$ what $k$
thinks of $j$) requires that $M_{kj} = M_{ij} = 1$, therefore
$j \sim k$. In other words, agents $j$ and $k$ must have mutual trust if agent $i$ trusts them both, since otherwise $j$'s distrust of $k$ would be adopted by $i$ or $k$'s distrust of $j$ would be adopted by $i$.
\end{proof}

\begin{lemma}\label{lem:core-targets}
In any absorbing state, an agent who has at least one outgoing one-way trust relation cannot have any incoming one-way trust relations, i.e. if $j \rhd k$ then there is no $i$ such that $i \rhd j$.
\end{lemma}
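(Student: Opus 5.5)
We argue by contradiction. Suppose $i \rhd j$ and $j \rhd k$ both hold in an absorbing state. Then $i$, $j$, $k$ are pairwise distinct: $i \neq j$ and $j \neq k$ by definition, and $i = k$ would require both $M_{ij} = 1$ and $M_{ji} = 1$, whereas $i \rhd j$ gives $M_{ji} = 0$. The plan is to use two gossip events to show that $M_{ik} = 1$, so that $i$ trusts both $j$ and $k$. We then derive a contradiction from Lemma~\ref{lem:targets}, either directly or through an equivalent gossip step.

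\emph{First step.} Consider the event $\mathrm{gos}(M, j, i, k)$, in which $j$ tells $i$ what $j$ thinks of $k$. Since $M_{ij} = 1$, the absorbing condition forces $M_{ik} = M_{jk} = 1$.

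\emph{Second step.} Determine $M_{ki}$. Consider $\mathrm{gos}(M, j, k, i)$ would need $M_{kj} = 1$, which fails because $j \rhd k$. Instead, we split into cases on $M_{ki}$.

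\begin{itemize}
\item If $M_{ki} = 1$, then $i \sim k$. This conflicts with Lemma~\ref{lem:exclusive}, because $i$ would have both a mutual trust relation and the outgoing one-way relation $i \rhd j$.
\item If $M_{ki} = 0$, then $i \rhd k$. Together with $i \rhd j$, Lemma~\ref{lem:targets} gives $j \sim k$, so in particular $M_{kj} = 1$. This contradicts $j \rhd k$, which requires $M_{kj} = 0$.
\end{itemize}

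The second case can also be checked directly: under $\mathrm{gos}(M, k, i, j)$, since $M_{ik} = 1$, absorption forces $M_{ij} = M_{kj} = 0$, contradicting $M_{ij} = 1$.

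No step here looks like a genuine obstacle. The only point needing care is the distinctness of $i$, $j$, $k$, so that every gossip triple used is valid and Lemma~\ref{lem:targets} applies to two distinct targets. This is exactly what the preliminary observation excluding $i = k$ secures.
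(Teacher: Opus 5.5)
Your argument is correct and follows the paper's route: $\mathrm{gos}(M,j,i,k)$ forces $M_{ik}=1$, and Lemma~\ref{lem:targets} then gives $j \sim k$, contradicting $M_{kj}=0$. Your case split on $M_{ki}$ (using Lemma~\ref{lem:exclusive} when $M_{ki}=1$) adds care the paper skips, since the paper applies Lemma~\ref{lem:targets} to plain trust although that lemma is stated for one-way trust; however, your direct check via $\mathrm{gos}(M,k,i,j)$ uses only $M_{ik}=1$ and $M_{kj}=0$, so it settles both cases at once and makes the split unnecessary.
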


\begin{proof}
Suppose agent $i$ trusts agent $j$ who has at least one outgoing one-way trust relation $j \rhd k$
for some agent $k$. Since $i$ trusts $j$, the gossip event
$\mathrm{gos}(M, j, i, k)$ (agent $j$ tells $i$ what $j$ thinks of $k$) requires $M_{ik} = M_{jk} = 1$, so $i$ trusts both $j$ and $k$.
By Lemma~\ref{lem:targets}, $j \sim k$. But $j \rhd k$ requires
$M_{kj} = 0$, contradicting $j \sim k$. In other words, agent $i$ trusts $j$ and therefore adopts $j$'s trust of $k$ and adopts $k$'s opinions, but since $k$ does not trust $j$ back, $i$ must lose trust in $j$, demonstrating that the opinion system had in fact not reached stability.
\end{proof}

\begin{definition}
Agents that have at least one outgoing one-way trust relation are called
\emph{peripheral members}, who receive no trust from any agent, whether mutual (by Lemma~\ref{lem:exclusive}) or incoming one-way (by Lemma~\ref{lem:core-targets}). Conversely, agents are called \emph{core members} if they have no outgoing one-way trust relations---they may have \emph{incoming} one-way trust relations, mutual trust relations, or no trust relations whatsoever.
\end{definition}

\begin{theorem}[Characterization]
An opinion matrix $M \in \{0,1\}^{n \times n}$ of set $S$ of $n$
agents describes an absorbing state if and only if there exists a
partition $\pi$ of $S$ into non-empty factions $F_1, \ldots, F_p$
where $p \leq n$ and a non-empty subset of core members
$C_\ell \subseteq F_\ell$ for each $\ell$, such that:
\begin{quote}
\textbf{Condition (i):} $M_{ij} = 1$ if and only if $i, j \in F_\ell$
and $j \in C_\ell$ (i.e.\ $i$ and $j$ belong to the same faction and
$j$ is a core member thereof).
\end{quote}
In other words, the set $S$ of $n$ agents contains up to $n$ factions,
each faction containing at least one core member, and no agent trusts
any agent except the core members of their own faction.
\end{theorem}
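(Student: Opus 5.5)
I will prove the two directions separately. The sufficiency direction is a direct check of the absorbing condition. The necessity direction builds the factions and cores from the relations $\sim$ and $\rhd$ using Lemmas~\ref{lem:equiv}--\ref{lem:core-targets}.

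\textbf{Sufficiency.} Suppose $M$ has the stated form, and take any triple $(a,z,y)$ with $M_{za}=1$. By Condition (i), $z$ and $a$ lie in a common faction $F_\ell$ and $a\in C_\ell$. I must show $M_{zy}=M_{ay}$. Since $z$ and $a$ lie in the same faction, $y$ belongs to their faction exactly when it belongs to either's. Membership of $y$ in $C_\ell$ is a property of $y$ alone. Hence Condition (i) gives $M_{zy}=1 \iff y\in C_\ell \iff M_{ay}=1$. One edge case needs care: $y=a$ (or $y=z$) involves an undefined diagonal entry, and such triples are excluded from gossip. So the absorbing condition holds.

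\textbf{Necessity.} Let $M$ be absorbing. I first take as cores the $\sim$-classes of agents that have at least one mutual trust relation; these are well defined by Lemma~\ref{lem:equiv}. I also treat each agent with no trust relations at all (no outgoing trust, mutual or one-way) as a singleton core. By Lemma~\ref{lem:exclusive}, no agent with a mutual relation is peripheral, so each such class consists of core members.

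Next take a peripheral agent $i$, so $i\rhd j$ for some $j$. By Lemma~\ref{lem:core-targets}, $j$ is not peripheral. I claim $j$ lies in one of the chosen cores. If $j$ has a mutual relation this is immediate. Otherwise $j$ must have no outgoing trust at all: a one-way outgoing relation is excluded, and any mutual one was excluded by assumption. So $j$ is a singleton core. By Lemma~\ref{lem:cascade}, $i$ trusts every member of $j$'s class. By Lemma~\ref{lem:targets}, any two targets of $i$ are mutually trusting, so all of $i$'s targets lie in one class $C$. Moreover, if $i$ has two distinct targets then $C$ is a genuine $\sim$-class, consistent with the above. I then assign $i$ to the faction of $C$, which gives faction $F=C\cup\{\text{peripherals attached to } C\}$. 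These factions partition $S$, and each has a non-empty core.

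Finally I verify Condition (i) entry by entry. Within a core $C$ of size at least $2$, all entries are $1$. Peripheral agents trust exactly their own core, as shown above. Core members never trust peripherals, by Lemmas~\ref{lem:exclusive} and~\ref{lem:core-targets}, and peripherals never trust other peripherals, by Lemma~\ref{lem:core-targets}. A core member $c$ cannot trust anything outside its own class: any $M_{cx}=1$ is either mutual, putting $x$ in $c$'s class, or one-way, which would make $c$ peripheral. All remaining entries are $0$.

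The main obstacle is the bookkeeping for agents with no trust relations, and for cores of size one that receive only incoming trust. I must make sure such a singleton core is treated consistently with Condition (i). It has empty self-trust, since the diagonal is undefined, and its peripherals trust only it. I must also check that the assignment of peripherals to cores is unique. Uniqueness follows from Lemma~\ref{lem:targets}: two targets in different classes would have to be mutually trusting, which is impossible.
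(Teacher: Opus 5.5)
Your proposal is correct and follows essentially the same route as the paper. Sufficiency is a direct check that a trusted speaker's row agrees with the listener's row away from the diagonal. Necessity builds the cores from the $\sim$-classes together with the agents that have no outgoing trust, attaches each peripheral agent to the unique core it trusts via Lemmas~\ref{lem:exclusive}--\ref{lem:core-targets}, and then verifies Condition~(i). You are somewhat more careful than the paper's write-up on three points: triples that touch the diagonal, singleton cores that receive one-way trust, and the uniqueness of each peripheral agent's core.
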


\begin{proof}[Proof of sufficiency]
Let $M$ satisfy condition~(i). To verify the absorbing condition for
every $\mathrm{gos}(M, a, z, y)$:

If agent $a$ is not a core member of $z$'s faction (whether $z$ is core
or peripheral), either because $a$
is peripheral in that faction ($a, z \in F_\ell$,
$a \notin C_\ell$), or because $a$ belongs to a different faction
($z \in F_\ell$, $a \notin F_\ell$), then $M_{za} = 0$ by
condition~(i), so agent $z$ cannot receive any opinions from $a$ and
the gossip event leaves $M$ unchanged.

If agent $a$ is a core member of $z$'s faction (whether $z$ is core
or peripheral) ($a \in C_\ell$, $z \in F_\ell$), then $M_{za} = 1$
and we must verify $M_{zy} = M_{ay}$. By condition~(i), both agents
$a$ and $z$ assign opinion 1 to core members of their shared faction
and opinion 0 to all other agents. Since agents $a$ and $z$ belong to
the same faction, their rows agree on every column, so
$M_{zy} = M_{ay}$.
\end{proof}

\begin{proof}[Proof of necessity]
Let $M$ describe an absorbing state. By Lemma~\ref{lem:equiv}, the mutual trust
relation $\sim$ is an equivalence relation on $S$. By Definition~3,
every agent is either a core member (participating in some equivalence
class of $\sim$ with at least one other agent, or having no mutual or
outgoing trust relations at all) or a peripheral member (having at
least one outgoing one-way trust relation). By Lemmas~\ref{lem:cascade}, \ref{lem:targets}, and~\ref{lem:core-targets}, a peripheral agent's trust targets belong to a single core: they must all be mutually trusting (Lemma~\ref{lem:targets}), the agent must trust all of them (Lemma~\ref{lem:cascade}), and none of them can be peripheral (Lemma~\ref{lem:core-targets}). Assign each peripheral agent to the faction of the core it trusts; assign agents with no trust relations at all to singleton factions
in which they are the sole core.

Now to verify Condition~(i) under this arrangement. If $i$ and $j$
are in the same faction and $j$ is a core member, then either $i$ is
also a core member (so $i \sim j$, giving $M_{ij} = 1$) or $i$ is
peripheral and trusts all core members of its faction (giving
$M_{ij} = 1$). Conversely, if $M_{ij} = 1$, then either $i \sim j$
(same core, same faction) or $i \rhd j$ (peripheral $i$ trusts core
$j$, same faction by construction). If $i$ and $j$ are in different
factions, then $j$ is not a trust target of $i$ by construction, so $M_{ij} = 0$.
\end{proof}

The preceding proof implies four possible faction structures, with size up to $n$ and with a minimum of one core agent but otherwise having no constraints on the numbers of core and peripheral agents:

\begin{figure}[t]
  \centering
  \begin{tikzpicture}[
      >=Stealth,
      core/.style={circle, fill={rgb,255:red,83;green,74;blue,183}, text=white, 
                   minimum size=16pt, inner sep=0pt, font=\scriptsize},
      periph/.style={circle, draw=gray, line width=0.7pt, 
                     minimum size=16pt, inner sep=0pt, font=\scriptsize},
      faction/.style={rounded corners=12pt, draw=gray, dashed, line width=0.7pt},
      mutual/.style={->, draw={rgb,255:red,83;green,74;blue,183}, line width=0.6pt,
                     >={Stealth[length=4pt]}},
      oneway/.style={->, draw=gray, line width=0.6pt, >={Stealth[length=4pt]}},
      flabel/.style={font=\small\bfseries},
  ]

  \draw[faction] (7.2, -3.0) rectangle (11.4, 0.4);
  \node[flabel] at (9.3, 0.05) {Faction 4};

  \node[core] (c1) at (8.5, -0.8) {$c_6$};
  \node[core] (c2) at (10.1, -0.8) {$c_7$};

  \draw[mutual] ([yshift=2.5pt]c1.east) -- ([yshift=2.5pt]c2.west);
  \draw[mutual] ([yshift=-2.5pt]c2.west) -- ([yshift=-2.5pt]c1.east);

  \node[periph] (p1) at (8.5, -2.2) {$p_2$};
  \node[periph] (p2) at (10.1, -2.2) {$p_3$};

  \draw[oneway] (p1.north) -- (c1.south);
  \draw[oneway] ([xshift=3pt]p1.north) -- ([xshift=-3pt]c2.south);
  \draw[oneway] ([xshift=-3pt]p2.north) -- ([xshift=3pt]c1.south);
  \draw[oneway] (p2.north) -- (c2.south);

  \draw[faction] (2.5, -3.0) rectangle (6.3, 0.4);
  \node[flabel] at (4.4, 0.05) {Faction 2};

  \node[core] (c3) at (4.4, -0.7) {$c_2$};
  \node[core] (c4) at (3.4, -2.43) {$c_3$};
  \node[core] (c5) at (5.5, -2.43) {$c_4$};

  \draw[mutual] ([shift={(-0.075cm,0.043cm)}]c3.south west) -- 
                ([shift={(-0.075cm,0.043cm)}]c4.north east);
  \draw[mutual] ([shift={(0.075cm,-0.043cm)}]c4.north east) -- 
                ([shift={(0.075cm,-0.043cm)}]c3.south west);

  \draw[mutual] ([shift={(0.075cm,0.043cm)}]c3.south east) -- 
                ([shift={(0.075cm,0.043cm)}]c5.north west);
  \draw[mutual] ([shift={(-0.075cm,-0.043cm)}]c5.north west) -- 
                ([shift={(-0.075cm,-0.043cm)}]c3.south east);

  \draw[mutual] ([yshift=2.5pt]c4.east) -- ([yshift=2.5pt]c5.west);
  \draw[mutual] ([yshift=-2.5pt]c5.west) -- ([yshift=-2.5pt]c4.east);

  \draw[faction] (-0.6, -1.2) rectangle (1.6, 0.4);
  \node[flabel] at (0.5, -0.05) {Faction 1};

  \node[core] (c6) at (0.5, -0.7) {$c_1$};

  \draw[faction] (-0.6, -3.0) rectangle (1.6, -1.5);
  \node[flabel] at (0.5, -1.95) {Faction 3};

  \node[core] (c7) at (0.05, -2.55) {$c_5$};
  \node[periph] (p3) at (0.95, -2.55) {$p_1$};

  \draw[oneway] (p3.west) -- (c7.east);

  \node[core, minimum size=10pt] at (0.3, -3.8) {};
  \node[anchor=west, font=\footnotesize] at (0.6, -3.8) {Core member};

  \node[periph, minimum size=10pt] at (0.3, -4.3) {};
  \node[anchor=west, font=\footnotesize] at (0.6, -4.3) {Peripheral member};

  \draw[mutual] (4.6, -3.8) -- (5.3, -3.8);
  \node[anchor=west, font=\footnotesize] at (5.45, -3.8) {Mutual trust};

  \draw[oneway] (4.6, -4.3) -- (5.3, -4.3);
  \node[anchor=west, font=\footnotesize] at (5.45, -4.3) {One-way trust};

  \end{tikzpicture}
  \captionsetup{font=footnotesize}
  \caption{An example absorbing state with partition
  $\pi = \{\{c_1\},\; \{c_2, c_3, c_4\},\; \{c_5, p_1\},\; \{c_6, c_7, p_2, p_3\}\}$
  and selected cores $\{c_1\}$, $\{c_2, c_3, c_4\}$, $\{c_5\}$, $\{c_6, c_7\}$.
  Violet nodes are core members; white nodes are peripheral.}
  \label{fig:factions}
\end{figure}

\noindent\textbf{Case~1:} a faction $F$ composed of a single agent
$i \in C_\ell$ who trusts no other agent ($C_\ell = F_\ell$)---the singleton faction. This
agent by definition has no outgoing trust relations, defining them as
a core member and the sole agent in their faction.

\noindent\textbf{Case~2:} a faction $F$ composed only of $m$ core members
$i_1, i_2, \ldots, i_m \in C_\ell$ who all share mutual trust
relations with each other ($C_\ell = F_\ell$).

\noindent\textbf{Case~3:} a faction $F$ composed of $k$ total members,
including exactly one core member $i \in C_\ell$ and $k - 1$
peripheral members $j_1, j_2, \ldots, j_{k-1} \notin C_\ell$. The
one core member trusts no other agent, and the peripheral members
only trust the one core member in their faction
($j_1, j_2, \ldots, j_{k-1} \rhd i$).

\noindent\textbf{Case~4:} a faction $F$ composed of $k$ total members,
including $m > 1$ core members
$i_1, i_2, \ldots, i_m \in C_\ell$ and $k - m$ peripheral members
$j_1, j_2, \ldots, j_{k-m} \notin C_\ell$. The core members trust
only each other, while the peripheral members
only trust the core members ($j_1, j_2, \ldots, j_{k-m} \rhd i_1, i_2, \ldots, i_m$).

A faction $F$ can be composed purely of core members, but not purely of peripheral members: any of its peripheral members who trust no other agents would themselves be core members of their own singleton factions, while any peripheral members who trust some other agent would turn that agent into a core member and fall into its faction.


\section{Enumeration}

The Characterization Theorem reduces the problem of counting absorbing
states to a combinatorial one: each absorbing state on set $S$ of $n$
agents corresponds to a choice of set partition (the factions) together with a choice
of non-empty subset within each faction (the core). We now derive exact counts for
both the labeled and unlabeled cases.

\begin{corollary}[Labeled count]
The number of absorbing states on $n$ labeled agents is
\begin{equation}
a(n) = \sum_{\pi} \prod_{F \in \pi} (2^{|F|} - 1)
\end{equation}
where the sum ranges over all set partitions $\pi$ of $S$, and the
product runs over the factions $F$ of $\pi$.
\end{corollary}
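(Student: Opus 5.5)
The plan is to turn the Characterization Theorem into a bijection between absorbing states on $S$ and pairs $(\pi,(C_F)_{F\in\pi})$, where $\pi$ is a set partition of $S$ and $C_F$ is a non-empty subset of each faction $F$. Once that bijection is in place, the formula follows immediately. For a fixed $\pi$, the choices of cores are independent across factions, and each faction $F$ contributes $2^{|F|}-1$ non-empty subsets. Summing the product over all $\pi$ gives $a(n)$.

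\textbf{Step 1 (the map to matrices).} Define $\Phi(\pi,(C_F))$ to be the matrix $M$ given by Condition~(i): $M_{ij}=1$ if and only if $i,j$ lie in a common faction $F$ and $j\in C_F$. By the sufficiency part of the theorem, $M$ is absorbing.

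\textbf{Step 2 (surjectivity).} By the necessity part of the theorem, every absorbing $M$ arises as $\Phi(\pi,(C_F))$ for some pair.

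\textbf{Step 3 (injectivity).} This is the main obstacle, since the theorem only asserts existence of a pair, not uniqueness. I will recover the pair from $M$ alone.

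First recover the cores. Call $j$ a core member if either some $i$ has $M_{ij}=1$, or $j$ trusts nobody. If $j\in C_F$ and $|F|\ge 2$, any other $i\in F$ trusts $j$. If $F=\{j\}$, then $j$ trusts nobody. Conversely, a peripheral $j\in F\setminus C_F$ trusts every member of the non-empty set $C_F$ and is trusted by no one. So $\bigcup_F C_F$ is exactly the set of core members, read off from $M$.

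Next recover the factions. Each agent $i$ trusts exactly $C_{F(i)}\setminus\{i\}$, where $F(i)$ is the faction containing $i$.
\begin{itemize}
\item For a peripheral $i$, this set is $C_{F(i)}$. It is non-empty, and it determines the faction, because the cores of distinct factions are disjoint and non-empty.
\item For a core $i$, the faction is determined by its core $\{i\}\cup\{j: M_{ij}=1\}$.
\end{itemize}
Thus the faction of each agent, and hence $\pi$ itself, is read off from $M$, so $\Phi$ is injective. The single case needing care is a singleton core $C_F=\{i\}$ in which $i$ trusts nobody. There the recovered core is $\{i\}$, and $F$ consists of $i$ together with the peripherals trusting exactly $\{i\}$, which is still determined by $M$.

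\textbf{Step 4 (counting).} With the bijection established, count pairs by fixing $\pi$ and choosing each $C_F$ independently. This yields $\sum_\pi\prod_{F\in\pi}(2^{|F|}-1)$.
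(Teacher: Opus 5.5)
Your proof is correct and follows the same route as the paper: the Characterization Theorem identifies absorbing states with pairs $(\pi,(C_F)_{F\in\pi})$, and each partition $\pi$ then contributes $\prod_{F\in\pi}(2^{|F|}-1)$. The paper's proof assumes this correspondence is a bijection without checking that distinct pairs give distinct matrices, so your Step~3 supplies the uniqueness argument it leaves implicit: you recover the cores as the agents that are trusted or trust nobody, and the factions as the fibres of $i\mapsto C_{F(i)}$.
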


\begin{proof}
A faction of size $k$ admits $2^k - 1$ non-empty subsets. Since the core choices are
independent across factions, the number of absorbing states arising
from a given partition $\pi$ is the product of $(2^{|F|} - 1)$ over
all factions $F$. Summing over all set partitions gives the total
count.

As a small example, for $n = 3$ with agents $\{1, 2, 3\}$, the five
set partitions contribute:
\begin{align*}
\{\{1, 2, 3\}\}: &\quad (2^3 - 1) = 7 \\
\{\{1, 2\}, \{3\}\}: &\quad (2^2 - 1)(2^1 - 1) = 3 \\
\{\{1, 3\}, \{2\}\}: &\quad (2^2 - 1)(2^1 - 1) = 3 \\
\{\{2, 3\}, \{1\}\}: &\quad (2^2 - 1)(2^1 - 1) = 3 \\
\{\{1\}, \{2\}, \{3\}\}: &\quad (2^1 - 1)^3 = 1 \\
& \hspace{2.8cm} \text{Total: } 17
\end{align*}
which matches the exhaustive computation in Section~4.
\end{proof}

\begin{corollary}[Exponential generating function]
The exponential generating function for the sequence $(a(n))_{n \geq 0}$ is
\begin{equation}
\sum_{n \geq 0} a(n) \frac{x^n}{n!}
= \exp\!\left(\sum_{k \geq 1} (2^k - 1) \frac{x^k}{k!}\right)
= \exp(e^x(e^x - 1)).
\end{equation}
\end{corollary}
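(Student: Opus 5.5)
The plan is to apply the exponential formula for labeled structures to the sum in Corollary~1. Each absorbing state is a set partition $\pi$ of $[n]$ with an independent structure on each block: a block $F$ of size $k$ carries one of $w_k = 2^k - 1$ structures, namely its non-empty core. By Corollary~1,
\[
a(n) = \sum_{\pi} \prod_{F\in\pi} w_{|F|}.
\]
The exponential formula says that for any weights $w_k$ ($k\ge 1$),
\[
\sum_{n\ge0}\Bigl(\sum_{\pi}\prod_{F\in\pi} w_{|F|}\Bigr)\frac{x^n}{n!} = \exp\Bigl(\sum_{k\ge1} w_k \frac{x^k}{k!}\Bigr),
\]
which gives the first equality. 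The convention $a(0)=1$ comes from the empty partition (empty product), and this matches the constant term $\exp(0)=1$.

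To keep the argument self-contained, I would sketch why the exponential formula holds. Expand
\[
\exp(W(x)) = \sum_{p\ge0} \frac{W(x)^p}{p!}, \qquad W(x)=\sum_{k\ge1} w_k\frac{x^k}{k!}.
\]
The coefficient of $x^n/n!$ in $W(x)^p$ is
\[
\sum_{k_1+\dots+k_p=n}\binom{n}{k_1,\dots,k_p}\prod_j w_{k_j},
\]
and this counts ordered sequences of $p$ non-empty disjoint blocks covering $[n]$, each carrying its weight. Dividing by $p!$ forgets the order of the blocks. Since the blocks are non-empty and disjoint, they are distinct, so each unordered partition into $p$ blocks is counted exactly once. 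Summing over $p$ gives the sum over all set partitions $\pi$.

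The closed form is then a routine series computation:
\[
\sum_{k\ge1}(2^k-1)\frac{x^k}{k!} = (e^{2x}-1)-(e^x-1) = e^{2x}-e^x = e^x(e^x-1).
\]
The main point needing care is the $1/p!$ step, where one must confirm that the blocks are distinguishable so no overcounting occurs. This holds because the blocks are non-empty, disjoint sets. Everything else is formal power series manipulation.

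As a sanity check, I would expand $\exp(e^{2x}-e^x)$ to order $x^3$ and confirm the coefficients $1,1,4,17$. The value $a(3)=17$ matches the example under Corollary~1.
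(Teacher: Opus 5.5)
Your proposal is correct and follows the same route as the paper: you apply the exponential formula with block weights $2^k-1$ from Corollary~1, then simplify $\sum_{k\ge1}(2^k-1)x^k/k!$ to $e^{2x}-e^x=e^x(e^x-1)$. The only difference is that you sketch a proof of the exponential formula (the multinomial expansion of $W(x)^p$ and the $1/p!$ factor for unordered, distinct blocks), whereas the paper simply cites Stanley for it.
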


\begin{proof}
This is an application of the exponential formula from Stanley
\cite{stanley1999}. When a structure on $S$ is built by partitioning
$S$ into non-empty factions and independently decorating each faction
of size $k$ in $F_k$ ways, the EGF for the total count is
\begin{equation}
\exp\!\left(\sum_{k \geq 1} F_k \frac{x^k}{k!}\right).
\end{equation}

Here $F_k = 2^k - 1$. To simplify the inner sum:
\begin{equation}
\sum_{k \geq 1} (2^k - 1) \frac{x^k}{k!}
= \sum_{k \geq 1} \frac{(2x)^k}{k!}
  - \sum_{k \geq 1} \frac{x^k}{k!}
= (e^{2x} - 1) - (e^x - 1)
= e^{2x} - e^x
= e^x(e^x - 1).
\end{equation}
Exponentiating gives $\exp(e^x(e^x - 1))$. This is the EGF for OEIS
A143405, whose asymptotics were studied by Kotesovec
\cite{kotesovec2022}.
\end{proof}

\begin{corollary}[Unlabeled count]
The number of absorbing states on $n$ agents up to relabeling is equal
to the number of plane partitions of $n$ (OEIS A000219).
\end{corollary}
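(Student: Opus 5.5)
By the Characterization Theorem, an absorbing state on $n$ agents is determined by a set partition of the agents into factions together with a non-empty core in each faction. I will count these structures up to relabeling and compare the resulting generating function with MacMahon's product.

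\emph{Step 1: the isomorphism type of a single faction.} Two absorbing states are isomorphic exactly when a relabeling of agents carries one matrix to the other. Relabeling preserves the relations $\sim$ and $\rhd$, so by the Characterization Theorem it carries factions to factions and cores to cores. Hence the isomorphism type of an absorbing state is the multiset of isomorphism types of its factions. A faction is determined up to relabeling by its size $k\ge 1$ and its core size $m$ with $1\le m\le k$, because condition~(i) fixes all entries once the core is chosen. So there are exactly $k$ isomorphism types of faction of size $k$, one for each core size $m=1,\dots,k$.

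\emph{Step 2: the generating function.} Unlabeled absorbing states on $n$ agents are therefore multisets of ``colored parts'', where a part of size $k$ carries one of $k$ colors and the part sizes sum to $n$. The standard multiset (Euler) construction gives the ordinary generating function
\[
\sum_{n\ge 0} u(n)\,x^n=\prod_{k\ge 1}\frac{1}{(1-x^k)^k}.
\]

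\emph{Step 3: identification with plane partitions.} By MacMahon's theorem \cite{macmahon1912,macmahon1916}, the product in Step~2 is also the generating function of plane partitions. Comparing coefficients gives the corollary.

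The only step needing care is Step~1: one must check that isomorphism of matrices corresponds exactly to isomorphism of the (partition, cores) data. This holds because the factions and cores are recovered canonically from $M$ via $\sim$ and $\rhd$, so the bijection of the Characterization Theorem commutes with the action of the symmetric group. I would also sanity-check small cases, for instance $n=3$ giving $1+2+3$ single-faction types, $2$ types with factions of sizes $\{2,1\}$, and $1$ type with three singletons, for a total of $6$, which matches A000219.
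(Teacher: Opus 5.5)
Your proposal is correct and takes essentially the same route as the paper. Both reduce via the Characterization Theorem to multisets of faction types $(k,m)$ with $1\le m\le k$, count these with $\prod_{k\ge 1}(1-x^k)^{-k}$, and invoke MacMahon's theorem; your remark that factions and cores are recovered canonically from $M$ makes explicit a point the paper only asserts. One small slip in your sanity check: for $n=3$ there are $3$ single-faction types (not $1+2+3$), and these together with the other $2+1$ types give the total of $6$.
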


\begin{proof}
Two absorbing states are isomorphic if and only if one can be obtained
from the other by a permutation of the agent labels. By the
Characterization Theorem, an absorbing state has factional structure,
which is fully determined by sizes of the factions and the number of
core members within each. Permuting agent labels can turn any faction
of size $k$ with $m$ core members into any other faction of the same
size with the same number of core members. Two absorbing states are
therefore isomorphic if and only if they have the same multiset of
pairs $(k, m)$, where $k$ is the faction size and
$m \in \{1, \ldots, k\}$ is the number of core members, for each
faction in that state.

The isomorphism classes are therefore in bijection with integer partitions of $n$ in which each part of size $k$ comes in $k$ distinguishable types, indexed by the core size $m$. The number of types of faction
equals the faction size: a faction of size $k$ admits exactly $k$
distinct core configurations (one for each core size
$m = 1, 2, \ldots, k$).

To count the number of such partitions: for each size $k$, the $k$
types of factions each contribute with generating function
$1/(1 - x^k)$ (encoding the choice of ``how many factions of this
type?''), and the $k$ types produce the $k$-fold product
$1/(1 - x^k)^k$. Since the $k$ types of faction contribute
independently their counts are multiplied, so the total count of
partitions is given by
\begin{equation}
\prod_{k \geq 1} \frac{1}{(1 - x^k)^k}.
\end{equation}
This is the generating function for plane partitions of $n$ from
MacMahon \cite{macmahon1912,macmahon1916} and Andrews \cite{andrews1976}.
\end{proof}

We note that this also establishes a previously unrecorded connection
between sequences A143405 and A000219 in the OEIS: the former is the
labeled count and the latter the unlabeled count of the same
combinatorial structure: set partitions with a distinguished non-empty
subset per block.


\section{Computational Verification}

We independently verified the Characterization Theorem and its
corollaries by exhaustive enumeration of the state space for $n \leq 7$.

For each value of $n$, every binary matrix $M$ in
$\{0,1\}^{n \times n}$ (with undefined diagonal) was tested against
the absorbing condition: for all triples $(a, z, y)$ of distinct
agents, either $M_{za} = 0$ or $M_{zy} = M_{ay}$. This requires
checking $O(n^3)$ triples per state across a state space of
$2^{n(n-1)}$ matrices.

For $n \leq 6$, the enumeration was performed in C on a single CPU
thread; the $n = 6$ case ($2^{30} \approx 1.07 \times 10^9$ states)
completed in under 4 seconds. For $n = 7$, the state space grows to
$2^{42} \approx 4.4 \times 10^{12}$ states, requiring GPU
acceleration. This computation was performed on an NVIDIA T4 GPU using
a CUDA implementation, taking 132 seconds.

For the unlabeled counts, absorbing states were classified up to
isomorphism under the action of the symmetric group $S_n$. The
canonical form of each state was computed by applying all $n!$
permutations to the adjacency matrix and taking the lexicographic
minimum. This was performed for $n \leq 7$ using a combined CUDA~+~CPU
program: the GPU identified the absorbing states, and the CPU
classified them, in a total of 253 seconds.

Additionally, for $n = 3$ and $n = 4$, the complete state transition
graph was constructed: every state was connected to all states
reachable by a single gossip event (Figure~2 shows the $n = 3$ case). Strongly connected
components (SCCs) were identified via Tarjan's algorithm, and
absorbing states were extracted as SCCs with no outgoing edges. Every state was found to eventually reach a fixed point; no limit cycles were detected.

\begin{figure}[t]
  \centering
  \scalebox{0.8}{%
  \begin{tikzpicture}[
      >=Stealth,
      absbox/.style={
          rounded corners=3pt,
          fill={rgb,255:red,83;green,74;blue,183},
          text=white, font=\footnotesize,
          minimum width=1.3cm, minimum height=1.1cm,
          inner sep=3pt, align=center,
      },
      transbox/.style={
          rounded corners=3pt,
          draw=gray, dashed, line width=0.8pt,
          fill={rgb,255:red,241;green,239;blue,232},
          text=gray!70!black, font=\footnotesize,
          minimum width=1.3cm, minimum height=1.1cm,
          inner sep=3pt, align=center,
      },
      elabel/.style={font=\footnotesize, anchor=north},
      rlabel/.style={font=\footnotesize\bfseries, text=gray, anchor=east},
      tarr/.style={
          draw=gray!50, line width=1pt,
          {Stealth[length=4pt]}-{Stealth[length=4pt]},
      },
      toarr/.style={
          draw=gray!50, line width=1pt,
          -{Stealth[length=4pt]},
      },
      abarr/.style={
          draw={rgb,255:red,83;green,74;blue,183},
          line width=1pt, dashed,
          -{Stealth[length=4pt]},
      },
  ]
 
  \def\xA{1.2}
  \def\xB{3.2}
  \def\xC{5.2}
  \def\xD{7.2}
  \def\xE{9.2}
  \def\xF{11.2}
  \def\xG{13.2}
  \def\ytrans{0}
  \def\yabs{-4.0}
 
  \node[rlabel] at (-0.2, \ytrans) {Transient};
  \node[rlabel] at (-0.2, \yabs) {Absorbing};
 
  \node[elabel] at (\xA, {\yabs-0.85}) {0};
  \node[elabel] at (\xB, {\yabs-0.85}) {1};
  \node[elabel] at (\xC, {\yabs-0.85}) {2};
  \node[elabel] at (\xD, {\yabs-0.85}) {3};
  \node[elabel] at (\xE, {\yabs-0.85}) {4};
  \node[elabel] at (\xF, {\yabs-0.85}) {5};
  \node[elabel] at (\xG, {\yabs-0.85}) {6};
  \node[font=\footnotesize, text=gray, anchor=north]
      at (7.2, {\yabs-1.2}) {Edge count};
 
  \node[absbox] (A0) at (\xA, \yabs) {1\\fixed\\point};
  \node[absbox] (A1) at (\xB, \yabs) {6\\fixed\\points};
  \node[absbox] (A2) at (\xC, \yabs) {6\\fixed\\points};
  \node[absbox] (A4) at (\xE, \yabs) {3\\fixed\\points};
  \node[absbox] (A6) at (\xG, \yabs) {1\\fixed\\point};
 
  \node[transbox] (T2) at (\xC, \ytrans) {9\\transient};
  \node[transbox] (T3) at (\xD, \ytrans) {20\\transient};
  \node[transbox] (T4) at (\xE, \ytrans) {12\\transient};
  \node[transbox] (T5) at (\xF, \ytrans) {6\\transient};
 
  \draw[tarr]  (T2.east) -- (T3.west);
  \draw[tarr]  (T3.east) -- (T4.west);
  \draw[toarr] (T4.east) -- (T5.west);
 
  \draw[abarr] (T2.south) -- (A1.north);
  \draw[abarr] ([xshift=-4pt]T3.south) -- ([xshift=4pt]A2.north);
  \draw[abarr] ([xshift=4pt]T3.south) -- ([xshift=-4pt]A4.north);
  \draw[abarr] ([xshift=-4pt]T5.south) -- ([xshift=4pt]A4.north);
  \draw[abarr] ([xshift=4pt]T5.south) -- ([xshift=-4pt]A6.north);
 
  \draw[tarr] (0.3, 1.5) -- (1.3, 1.5);
  \node[font=\footnotesize, anchor=west] at (1.5, 1.5)
      {Between transient states};
  \draw[abarr] (0.3, 0.9) -- (1.3, 0.9);
  \node[font=\footnotesize, anchor=west] at (1.5, 0.9)
      {Absorption into fixed point};
  \node[font=\footnotesize\itshape, text=gray!70!black,
        anchor=west, text width=6cm] at (7.5, 1.2)
      {All 47 transient states reach one of the
       17 fixed points. No limit cycles.};
 
  \end{tikzpicture}}
  \caption{State space for $n=3$: all 64 states partitioned into
  47 transient states and 17 fixed points, grouped by trust-edge count.
  Arrows indicate possible transitions; no limit cycles exist.}
  \label{fig:statespace}
\end{figure}

The results of computational verification are summarized in Table~1.

\begin{table}[ht]
\centering
\caption{Exhaustive verification of absorbing state counts for $n \leq 7$.}
\label{tab:verification}
\small
\begin{tabular}{@{}rrrrrr@{}}
\toprule
$n$ & $2^{n(n-1)}$ & \makecell{Absorbing\\(labeled)} & A143405$(n)$
    & \makecell{Absorbing\\(unlabeled)} & A000219$(n)$ \\
\midrule
1 & 1                          &      1 &      1 &  1 &  1 \\
2 & 4                          &      4 &      4 &  3 &  3 \\
3 & 64                         &     17 &     17 &  6 &  6 \\
4 & 4{,}096                    &     89 &     89 & 13 & 13 \\
5 & 1{,}048{,}576               &    552 &    552 & 24 & 24 \\
6 & 1{,}073{,}741{,}824         &  3{,}895 &  3{,}895 & 48 & 48 \\
7 & 4{,}398{,}046{,}511{,}104   & 30{,}641 & 30{,}641 & 86 & 86 \\
\bottomrule
\end{tabular}
\end{table}

In all cases where both values were computed, the labeled counts match
A143405 and the unlabeled counts match A000219 exactly.

Source code for the CUDA implementation of the combined classification program is available at \href{https://github.com/scharmcrab/absorbing-states-enumeration}{GitHub ("absorbing-state-enumerations", user scharmcrab)}.

More efficient enumeration, perhaps applying McKay's algorithm for
canonical graph labelling \cite{mckay2014}, could extend the
verification range beyond $n=7$.


\section{Discussion}

The identification of binary gossip absorbing states with plane partitions connects a simple social dynamics model to a classical object in enumerative combinatorics; this connection is, to our knowledge, new.

\paragraph{Relation of binary gossip model to other gossip models}
The epistemic gossip protocols studied by van Ditmarsch and Apt
\cite{apt2016,vanditmarsch2017} use a monotone update rule (secrets,
once learned, are never forgotten), yielding the unique absorbing
state of full knowledge. In contrast, the non-monotone dynamics studied here, where
trust can be both gained and lost, produce an entire
landscape of absorbing states. The Boolean gossip networks of Li et al.\ \cite{li2018} study binary-valued consensus with Boolean update functions on a fixed communication graph; the structure of their equilibria depends on the topology of the chosen graph, whereas in our model all agents can communicate and the equilibrium count depends only on $n$.

The gossip function $\mathrm{gos}(M, a, z, y)$ studied here encodes
the triadic logic identified by Heider \cite{heider1946} and
formalized in graph-theoretic terms by Cartwright and Harary
\cite{cartwright1956}: trust in a speaker causes the listener to
inherit the speaker's opinion of a third party. Marvel et al.\
\cite{marvel2011} studied a continuous-time version of this principle
and proved that generic initial conditions converge to at most two
hostile factions. Our discrete binary model admits richer equilibria:
multiple factions with internal core-periphery structure, a difference
attributable to the directed, asymmetric nature of trust in our
formulation.

Kawakatsu, Kessinger, and Plotkin \cite{kawakatsu2024} study a closely
related model in the evolutionary biology literature: agents hold
binary (Good/Bad) opinions about each other in an $n \times n$ matrix,
and gossip copies one agent's opinion to another. The critical
difference is that in their model, the listener unconditionally adopts
the speaker's view regardless of trust. The trust-gating condition
($M_{za} = 1$) is what produces the faction structure in our model,
and without it the dynamics do not generate isolated factions with
core-periphery hierarchies.

Jia, Friedkin, and Bullo \cite{jia2015, jia2016} study the coevolution of
opinions and interpersonal appraisals, showing convergence to
what they describe as ``factions with followers,'' which is
the closest qualitative match to our core-peripheral factions. The
mathematical setting differs substantially, as they use
continuous-valued appraisals and separate opinions and influence
weights. In addition, they characterize equilibrium states but
do not enumerate them; the combinatorial counting results of Section~3
appear to have no counterpart in the opinion dynamics literature.

The formation of factions and hierarchies
is a recurring theme across these diverse models of
opinion flow, whether continuous or discrete, conditional or
unconditional, analytical or simulation-based. The contribution of the
present work is not the qualitative observation of faction or hierarchy, but the
exact combinatorial characterization thereof.

\paragraph{Open problems}
Several directions remain:

\emph{Convergence.}
We have characterized the structure and number of the absorbing states, but not which
absorbing states are more likely to be reached or how quickly, or
whether any initial state can lead to any absorbing state. We can
compute which initial conditions lead to which equilibria for small
$n$ (Section~4) but we lack a general theory.

\emph{Limit cycles.}
Exhaustive computation for $n \leq 4$ reveals no limit cycles: every
state eventually reaches a fixed point. We conjecture this holds for
all $n$, but a proof is not in hand.

\emph{Extensions of the model.}
The gossip model we have been exploring here can be said to be a
binary, 1st-order, random gossip model:
\begin{enumerate}
\item[(A)] \emph{Binary:} opinion values are strictly either 0 or 1,
  i.e.\ $M_{ij} \in \{0, 1\}$.
\item[(B)] \emph{1st-order:} only 1st-order opinion values of form
  $a(b)$ are stored and exchanged.
\item[(C)] \emph{Random gossip:} gossip triples are selected
  at random.
\end{enumerate}
This model is a special case which can be extended by allowing opinion
to take on values in continuous range $M_{ij} \in [0, 1]$, by
augmenting the agents' theory of mind, allowing them to not only
communicate their own opinion, e.g.\ $a(b)$, but also hearsay from
others, e.g.\ $a(b(c))$, and by selecting gossip triples non-randomly
(for example biasing agents towards gossiping with agents they trust).
The absorbing states of continuous, higher-order and non-random
communication models are not yet characterized or mapped to a known
combinatorial object; our initial exploration may serve as the first step to a full description of a broader class of model.

\bigskip
The Characterization Theorem reveals that the equilibrium of binary, 1st order, random gossip forces a specific social architecture: the population fragments into
isolated factions with no cross-faction trust. Each faction has a
strict core--periphery hierarchy, where peripheral agents
trust the core but the core does not reciprocate. This structure is
not imposed but rather emerges from simple initial rules as the only configuration stable
against any further gossip---factional hierarchy is an organic consequence
of this form of trust-based information transmission. But unlike in prior models, the number of such configurations can be determined exactly, and equals the number of plane partitions of $n$.


\section*{Acknowledgements}

Formatted and illustrated using Overleaf.

No competing interests to declare.

This research did not receive any specific grant from funding agencies
in the public, commercial, or not-for-profit sectors.

\medskip
Use of AI tools: Anthropic Claude Opus 4.6 and Sonnet 4.6 were used
for assistance with drafting sections, conducting literature searches,
and coding tasks. All mathematical content was verified independently
by the author.



\begin{thebibliography}{24}

\bibitem{andrews1976}
G.E.~Andrews, \emph{The Theory of Partitions},
Addison-Wesley, Encyclopedia of Mathematics and its Applications,
Vol.~2, 1976. Cambridge University Press paperback reprint, 1998.

\bibitem{apt2016}
K.R.~Apt, D.~Grossi, W.~van der Hoek,
Epistemic protocols for distributed gossiping,
in: Proceedings of TARK 2015, EPTCS 215, pp.~51--66, 2016.


\bibitem{cartwright1956}
D.~Cartwright, F.~Harary,
Structural balance: A generalization of Heider's theory,
\emph{Psychological Review} 63\,(5) (1956) 277--293.

\bibitem{cooper2019}
M.C.~Cooper, A.~Herzig, F.~Maffre, F.~Maris, P.~R\'{e}gnier,
The epistemic gossip problem,
\emph{Discrete Mathematics} 342\,(3) (2019) 654--663.

\bibitem{degroot1974}
M.H.~DeGroot,
Reaching a consensus,
\emph{Journal of the American Statistical Association} 69\,(345)
(1974) 118--121.

\bibitem{deffuant2013}
G.~Deffuant, T.~Carletti, S.~Huet,
The Leviathan model: Absolute dominance, generalised distrust, small
worlds and other patterns emerging from combining vanity with opinion
propagation,
\emph{Journal of Artificial Societies and Social Simulation} 16\,(1)
(2013) 5.

\bibitem{friedkin1990}
N.E.~Friedkin, E.C.~Johnsen,
Social influence and opinions,
\emph{The Journal of Mathematical Sociology} 15\,(3--4) (1990)
193--206.

\bibitem{heider1946}
F.~Heider,
Attitudes and cognitive organization,
\emph{The Journal of Psychology} 21\,(1) (1946) 107--112.

\bibitem{jia2015}
P.~Jia, A.~MirTabatabaei, N.E.~Friedkin, F.~Bullo,
Opinion dynamics and the evolution of social power in influence
networks,
\emph{SIAM Review} 57\,(3) (2015) 367--397.

\bibitem{jia2016}
P.~Jia, N.E.~Friedkin, F.~Bullo,
The coevolution of appraisal and influence networks leads to
structural balance,
\emph{IEEE Trans.\ Network Science and Engineering} 3\,(4) (2016)
286--298.

\bibitem{kawakatsu2024}
M.~Kawakatsu, T.A.~Kessinger, J.B.~Plotkin,
A mechanistic model of gossip, reputations, and cooperation,
\emph{PNAS} 121\,(20) (2024) e2400689121.

\bibitem{kotesovec2022}
V.~Kotesovec,
Asymptotics for a certain group of exponential generating functions,
arXiv:2207.10568, 2022.

\bibitem{li2018}
B.~Li, J.~Wu, H.~Qi, A.~Proutiere, G.~Shi,
Boolean gossip networks,
\emph{IEEE/ACM Trans.\ Networking} 26\,(1) (2018) 118--130.

\bibitem{macmahon1912}
P.A.~MacMahon,
Memoir on the theory of partitions of numbers---Part~VI,
\emph{Phil.\ Trans.\ Royal Society~A} 211 (1912) 345--373.

\bibitem{macmahon1916}
P.A.~MacMahon,
\emph{Combinatory Analysis}, Vols.~1--2,
Cambridge University Press, 1915--1916.

\bibitem{marvel2011}
S.A.~Marvel, J.~Kleinberg, R.D.~Kleinberg, S.H.~Strogatz,
Continuous-time model of structural balance,
\emph{PNAS} 108\,(5) (2011) 1771--1776.

\bibitem{mckay2014}
B.D.~McKay, A.~Piperno,
Practical graph isomorphism, II,
\emph{Journal of Symbolic Computation} 60 (2014) 94--112.

\bibitem{stanley1999}
R.P.~Stanley,
\emph{Enumerative Combinatorics}, Volume~2,
Cambridge Studies in Advanced Mathematics No.~62,
Cambridge University Press, 1999.

\bibitem{vanditmarsch2017}
H.~van Ditmarsch, J.~van Eijck, P.~Pardo, R.~Ramezanian,
F.~Schwarzentruber,
Epistemic protocols for dynamic gossip,
\emph{Journal of Applied Logic} 20 (2017) 1--31.

\end{thebibliography}
\end{document}